\def\p1{\phi^{-1}}
\renewcommand\section{\@startsection {section}{1}{\z@}%
                                    {-3.5ex \@plus -1ex \@minus -.2ex}%
                                    {2.3ex \@plus.2ex}%
                                    {\normalfont\small\bfseries}}
\theoremstyle{definition}
\newtheorem{theorem}{Theorem}[section]
\newtheorem{corollary}[theorem]{Corollary}
\begin{document}

\title {{\bf\large Nonlinear mixed Jordan triple $*$-derivations on $*$-algebras}
\footnotetext{ The authors are supported by the Natural Science Foundation of Shandong Province,
China (Grant No. ZR2018BA003) and the National Natural Science Foundation of China
(Grant No. 11801333).}\footnotetext{ $^{*}$ Corresponding author. E-mail: lcjbxh@163.com (C. Li).}}
\author {{Dongfang Zhang}, {Changjing Li$\ ^{* }$ }\\
{\small  School of Mathematics and Statistics, Shandong Normal University, Jinan 250014, P. R. China}\\
} \date{ } \maketitle

\begin{abstract}
Let $\mathcal {A}$ be a unital $\ast$-algebra. For $A, B\in\mathcal
{A}$, define by $[A, B]_{*}=AB-BA^{\ast}$ and $A\bullet B=AB+BA^{\ast}$ the new products of $A$ and $B$. In this paper, under some mild conditions on  $\mathcal {A}$, it is shown that a  map $\Phi:
\mathcal {A}\rightarrow \mathcal {A}$ satisfies
$\Phi([A\bullet B, C]_{*})=[\Phi(A)\bullet B, C]_{*}+[A\bullet \Phi(B), C]_{*}+[A\bullet B, \Phi(C)]_{*}$ for all
$A, B,C\in\mathcal {A}$ if and only if $\Phi$ is an additive
$*-$derivation.  In particular, we apply the above result to  prime $\ast$-algebras, von Neumann algebras with no central summands of type $I_1$, factor von Neumann algebras and standard operator algebras.

\noindent{\it Keywords: mixed Jordan triple $*$-derivations; $*$-derivations; von Neumann
algebras.} \vskip 2mm

\noindent \textit {2020 Mathematics Subject Classification: 47B47; 46L10}

\end{abstract} \maketitle

\section{Introduction }
Let $\mathcal {A}$ be a $*$-algebra over the complex field
$\mathbb{C}$.  For $A, B\in\mathcal
{A}$, define the skew Lie product of $A$ and  $B$ by
$[A,B]_{\ast}=AB-BA^{\ast}$  and the Jordan $*$-product  of $A$ and  $B$ by $A\bullet B=AB+BA^{\ast}$. The skew Lie product and  the Jordan $*$-product are fairly meaningful and important in some research topics.
They were extensively studied because they naturally arise in the problem of representing
quadratic functionals with sesquilinear functionals (see \cite{12,13,14}) and in the problem of characterizing ideals
(see \cite{2,11}). Particular attention has been paid
to understanding maps which preserve the skew Lie product or  the Jordan $*$-product on $\ast$-algebras (see \cite{1,3,4,5,7,15,8,9,16}).

Recall that an additive map $\Phi: \mathcal
{A}\rightarrow\mathcal {A}$ is said to be an additive derivation if
$\Phi(AB)=\Phi(A)B+A\Phi(B)$ for all $A, B\in\mathcal {A}.$  Furthermore, $\Phi$ is said to be an additive $*$-derivation if it is an additive
derivation and satisfies $\Phi(A^{*})=\Phi(A)^{*}$ for all
$A\in\mathcal {A}.$ A not necessarily linear map
$\Phi:\mathcal {A}\rightarrow\mathcal {A}$ is said to be a nonlinear Jordan $*$-derivation or a nonlinear skew Lie derivation
if $$\Phi(A\bullet B)=\Phi(A)\bullet B+A\bullet \Phi(B)$$  or $$\Phi([A,B]_{\ast})=[\Phi(A), B]_{\ast}+[A, \Phi(B)]_{\ast}$$
for all $A, B\in\mathcal {A}$.
Yu and Zhang in \cite{Yu} proved that every nonlinear skew Lie derivation on factor von Neumann algebras
is an additive $*$-derivation.  Jing in \cite{Ji} studied nonlinear skew Lie derivations on standard operator algebras. Let $\mathcal {A}$ be a standard operator algebra on a complex Hilbert space $H$ which is closed under the adjoint operation. It was shown that every nonlinear skew Lie derivation $\Phi$ on $\mathcal {A}$ is automatically linear. Moreover, $\Phi$ is an inner $*$-derivation. Taghavi et al. \cite{Ta} and Zhang \cite{Zh} independently investigated nonlinear Jordan $*$-derivations on factor von Neumann algebras, respectively. It turns out that every nonlinear Jordan $*$-derivation between factor von Neumann algebras is an additive $*$-derivation. Li et al. in \cite{Li} investigated nonlinear skew Lie derivations and Jordan $*$-derivations on von Neumann algebras with no central summands of type $I_1$.

Given the consideration of nonlinear Jordan $*$-derivations and nonlinear skew Lie derivations, we can further develop them in one natural way. A map $\Phi:\mathcal {A}\rightarrow\mathcal {A}$ is said to be a nonlinear Jordan triple $*$-derivation or a nonlinear skew Lie triple derivation
if $$\Phi(A\bullet B\bullet C)=\Phi(A)\bullet B\bullet C+A\bullet \Phi(B)\bullet C+A\bullet B\bullet \Phi(C)$$
or
$$\Phi([[A,B]_{\ast}, C]_{\ast})=[[\Phi(A), B]_{\ast}, C]_{\ast}+[[A, \Phi(B)]_{\ast}, C]_{\ast}+[[A, B]_{\ast}, \Phi(C)]_{\ast}$$
for all
$A, B, C\in\mathcal {A}$. Li et al.\cite{Li2} proved that every nonlinear skew Lie triple derivation on factor von Neumann algebras
is an additive $*$-derivation.   Fu and An \cite{Fu} proved that $\Phi$ is a nonlinear skew Lie triple derivation on von Neumann algebras with no central summands of type $I_1$ if and only if $\Phi$ is an additive $*$-derivation.
Zhao and Li \cite{Zhao} proved that every nonlinear Jordan triple $*$-derivation between von Neumann algebras with no central summands of type $I_{1}$  is an additive $*$-derivation. Lin \cite{lin,lin2} studied the nonlinear  skew Lie $n$-derivations on standard operator algebras and von Neumann algebras with no central summands of type $I_1$.

In this paper, we will study the nonlinear mixed Jordan triple $*$-derivations on $*$-algebras. A map $\Phi:\mathcal {A}\rightarrow\mathcal {A}$ is said to be a nonlinear mixed Jordan triple $*$-derivation if $$\Phi([A\bullet B, C]_{*})=[\Phi(A)\bullet B, C]_{*}+[A\bullet \Phi(B), C]_{*}+[A\bullet B, \Phi(C)]_{*}$$ for all $A, B,C\in\mathcal {A}$.  Under some mild conditions on  a $*$-algebra $\mathcal {A}$, we prove that a  map $\Phi:
\mathcal {A}\rightarrow \mathcal {A}$ is a nonlinear mixed Jordan triple $*$-derivation if and only if $\Phi$ is an additive
$*-$derivation.  In particular, we apply the above result to  prime $\ast$-algebras, von Neumann algebras with no central summands of type $I_1$, factor von Neumann algebras and standard operator algebras.

\section{The main result and its proof}

Our main result in this paper reads as follows.
\begin{theorem}
Let $\mathcal {A}$ be a  unital $\ast$-algebra with the unit $I$. Assume that $\mathcal {A}$ contains a nontrivial projection $P$ which satisfies
$$(\spadesuit)~~~X\mathcal {A}P=0~~~implies~~~X=0$$
and
$$(\clubsuit)~~~X\mathcal {A}(I-P)=0~~~implies~~~X=0.$$ Then a  map $\Phi:
\mathcal {A}\rightarrow \mathcal {A}$ satisfies
$\Phi([A\bullet B, C]_{*})=[\Phi(A)\bullet B, C]_{*}+[A\bullet \Phi(B), C]_{*}+[A\bullet B, \Phi(C)]_{*}$ for all
$A, B,C\in\mathcal {A}$ if and only if $\Phi$ is an additive
$*-$derivation.
\end{theorem}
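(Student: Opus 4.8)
\emph{Strategy and converse direction.} The converse is a routine verification: assuming $\Phi$ is an additive $*$-derivation, one expands $[A\bullet B,C]_*=(A\bullet B)C-C(A\bullet B)^*$, applies the Leibniz rule together with $\Phi(X^*)=\Phi(X)^*$, and uses $\Phi(A\bullet B)=\Phi(A)\bullet B+A\bullet\Phi(B)$ (which itself follows from additivity and the $*$-derivation property) to collect terms into the three summands on the right. I will therefore concentrate on the forward implication. Throughout I write $P_1=P$, $P_2=I-P$, and use the Peirce decomposition $\mathcal{A}=\mathcal{A}_{11}\oplus\mathcal{A}_{12}\oplus\mathcal{A}_{21}\oplus\mathcal{A}_{22}$ with $\mathcal{A}_{ij}=P_i\mathcal{A}P_j$, so that every $A$ is written $A=\sum_{i,j}A_{ij}$ with $A_{ij}\in\mathcal{A}_{ij}$. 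The role of $(\spadesuit)$ and $(\clubsuit)$ is nondegeneracy: to conclude that an element $X$ vanishes it suffices to show $XTP=0$ for all $T$ (or $XT(I-P)=0$ for all $T$), and these will be the tools that turn vanishing of various $[\Delta\bullet B,C]_*$-type expressions into $\Delta=0$. The first, immediate step is $\Phi(0)=0$, obtained by setting $A=B=C=0$.

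\emph{Additivity (the heart of the proof).} The main task is to prove that $\Phi$ is additive, and I would do this by the Peirce-block bootstrap standard for such nonlinear functional identities. For a prospective defect $\Delta=\Phi(A+B)-\Phi(A)-\Phi(B)$, the goal is to show $\Delta=0$ by feeding the defining identity suitably chosen triples. Concretely, I would proceed in stages: (i) choosing $A,B,C$ among $P_1,P_2$ and individual off-diagonal components, show $\Phi(A_{12}+B_{21})=\Phi(A_{12})+\Phi(B_{21})$ and, more generally, that $\Phi$ respects the direct-sum splitting, i.e. $\Phi\bigl(\sum_{ij}A_{ij}\bigr)=\sum_{ij}\Phi(A_{ij})$; (ii) establish additivity inside each off-diagonal block $\mathcal{A}_{12}$ and $\mathcal{A}_{21}$, exploiting products with $P_1,P_2$ that collapse $[A\bullet B,C]_*$ into an expression linear in the block variable; (iii) deduce additivity inside the diagonal blocks $\mathcal{A}_{11},\mathcal{A}_{22}$ by relating a diagonal element to off-diagonal data through identities built from $A_{ii}\bullet X_{ij}$ and then lifting the conclusion back; and (iv) assemble these into full additivity on $\mathcal{A}$. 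At each stage the nondegeneracy hypotheses $(\spadesuit),(\clubsuit)$ are what upgrade ``$\Delta$ annihilates enough of $\mathcal{A}P$ (resp. $\mathcal{A}(I-P)$)'' to ``$\Delta=0$.'' I expect stage (iii) to be the principal obstacle: the diagonal blocks are not directly accessed by the skew product $[\cdot,\cdot]_*$, so they must be reached indirectly, and keeping track of the several cross-terms while isolating the diagonal defect is the most delicate bookkeeping in the argument.

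\emph{Normalizing $\Phi(I)$, the $*$-property, and the Leibniz rule.} Once additivity is established I can linearize freely. I would first pin down $\Phi(I)$: substituting $C=I$ turns $[A\bullet B,I]_*$ into $(A\bullet B)-(A\bullet B)^*$, and comparing the two sides forces $\Phi(I)$ to be central/self-adjoint and, after testing against the block structure, $\Phi(I)=0$. Next I would prove the $*$-preserving property $\Phi(A^*)=\Phi(A)^*$ by specializing the identity to self-adjoint and skew-adjoint elements and to the projections $P_1,P_2$, again clearing residual terms via $(\spadesuit),(\clubsuit)$. Finally, with additivity, $\Phi(I)=0$, and $*$-preservation in hand, the derivation identity $\Phi(AB)=\Phi(A)B+A\Phi(B)$ follows by expanding $[A\bullet B,C]_*$ for well-chosen $C$ (such as $C=I$ and $C=P_i$), matching the Leibniz terms block by block, and invoking the nondegeneracy conditions one last time to annihilate leftover terms. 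In summary, the single hard part is the additivity bootstrap of the second paragraph; the normalization of $\Phi(I)$, the $*$-property, and the multiplicative Leibniz rule are comparatively mechanical consequences once additivity is available.
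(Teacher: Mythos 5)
Your outline of the additivity stage coincides with the paper's actual argument (a Peirce-block bootstrap using test elements such as $i(P_2-P_1)$, $iP_k$, and off-diagonal $C_{jk}$, with $(\spadesuit)$, $(\clubsuit)$ killing the defect), and you correctly single out the diagonal blocks as the delicate case. The normalization $\Phi(I)=0$ and the $*$-property are also obtained in the paper essentially as you describe. The problem is your final paragraph.

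There is a genuine gap in the passage from the triple identity to the Leibniz rule, which you dismiss as ``comparatively mechanical.'' The obstruction is that $\Phi(P_i)$ need \emph{not} vanish: the paper's Claims 4--6 show only that the diagonal Peirce components of $\Phi(P_1),\Phi(P_2)$ are zero, while the off-diagonal components $P_1\Phi(P_1)P_2$ and $P_2\Phi(P_1)P_1$ can be nonzero (indeed, the inner $*$-derivation $A\mapsto AT-TA$ with $T$ skew-adjoint and purely off-diagonal satisfies all hypotheses and has $\Phi(P_1)=P_1T-TP_1\neq 0$). Consequently $\Phi$ itself does not preserve the Peirce blocks, and your plan of ``expanding $[A\bullet B,C]_*$ for well-chosen $C$ and matching Leibniz terms block by block'' cannot be executed for $\Phi$ directly: every such expansion is contaminated by the unknown off-diagonal parts of $\Phi(P_i)$. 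The paper's route around this consists of two ideas absent from your proposal. First, since $I\bullet A=2A$ and $\Phi(I)=0$, the ternary identity collapses to the binary skew Lie derivation identity $\Phi([A,B]_*)=[\Phi(A),B]_*+[A,\Phi(B)]_*$ (Claim 7), which also yields $\Phi(A^*)=\Phi(A)^*$ in one line by taking $B=I$. Second, and crucially, one subtracts an inner derivation: setting $T=P_1\Phi(P_1)P_2-P_2\Phi(P_1)P_1$ (which is skew-adjoint) and $\phi(A)=\Phi(A)-(AT-TA)$, one gets $\phi(P_1)=\phi(P_2)=0$ while $\phi$ remains additive and a skew Lie derivation; only then do block preservation $\phi(\mathcal{A}_{ij})\subseteq\mathcal{A}_{ij}$ and the block-by-block Leibniz identities follow (using $[P_i,\cdot]_*$, $[A_{ii},B_{ij}]_*$, multiplication by test elements $C_{ij}$, and $(\spadesuit)$, $(\clubsuit)$), and since the subtracted inner map is itself an additive $*$-derivation, the conclusion transfers back to $\Phi$. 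Without this perturbation step your final stage does not go through as described.
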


\begin{proof} Let $P_{1}=P$ and
$P_{2}=I-P.$ Denote $\mathcal {A}_{jk}=P_{j}\mathcal {A}P_{k}, j, k=1,2.$ Then $\mathcal
{A}=\sum^{2}_{j,k=1}\mathcal {A}_{jk}.$ In all that
follows, when we write $A_{jk},$ it indicates that
$A_{jk}\in\mathcal {A}_{jk}$. Clearly, we only need to prove the necessity. We will complete the proof by
several claims. \\
\textbf{Claim 1}. $\Phi(0)=0.$

Indeed, we have $$\Phi(0)=\Phi([0\bullet 0, 0]_{*})=[\Phi(0)\bullet 0, 0]_{*}+[0\bullet \Phi(0), 0]_{*}+[0\bullet 0, \Phi(0)]_{*}=0.$$
\textbf{Claim 2}. $\Phi$ is additive.

We will complete the proof of Claim 2 by proving several steps.\\
\textbf{ Step 2.1}. For every $A_{12}\in\mathcal {A}_{12},
B_{21}\in\mathcal {A}_{21},$ we have
$$\Phi(A_{12}+B_{21})=\Phi(A_{12})+\Phi(B_{21}).$$

We only need show that
$T=\Phi(A_{12}+B_{21})-\Phi(A_{12})-\Phi(B_{21})=0.$
Since $$[I\bullet (i(P_{2}-P_{1})),A_{12}]_{*}=[I\bullet (i(P_{2}-P_{1})),B_{21}]_{*}=0,$$
where $i$ is the imaginary unit, it
follows from Claim 1 that
\begin{align*}
&[\Phi(I)\bullet (i(P_{2}-P_{1})),A_{12}+B_{21}]_{*}+[I\bullet \Phi(i(P_{2}-P_{1})),A_{12}+B_{21}]_{*}\\
&+[I\bullet (i(P_{2}-P_{1})),\Phi(A_{12}+B_{21})]_{*}\\
&=\Phi([I\bullet (i(P_{2}-P_{1})),A_{12}+B_{21}]_{*})\\
&=\Phi([I\bullet (i(P_{2}-P_{1})),A_{12}]_{*})+\Phi([I\bullet (i(P_{2}-P_{1})), B_{21}]_{*})\\
&=[\Phi(I)\bullet (i(P_{2}-P_{1})),A_{12}+B_{21}]_{*}+[I\bullet \Phi(i(P_{2}-P_{1})),A_{12}+B_{21}]_{*}\\
&+[I\bullet (i(P_{2}-P_{1})),\Phi(A_{12})+\Phi(B_{21})]_{*}.
\end{align*}
From this, we get $[I\bullet (i(P_{2}-P_{1})),T]_{*}=0.$ So
$T_{11}=T_{22}=0.$

Since $[I\bullet A_{12},P_{1}]_{*}=0$, it
follows that
\begin{align*}
&[\Phi(I)\bullet (A_{12}+B_{21}),P_{1}]_{*}+[I\bullet \Phi(A_{12}+B_{21}),P_{1}]_{*}\\
&+[I\bullet (A_{12}+B_{21}),\Phi(P_{1})]_{*}\\
&=\Phi([I\bullet (A_{12}+B_{21}),P_{1}]_{*})\\
&=\Phi([I\bullet A_{12},P_{1}]_{*})+\Phi([I\bullet B_{21},P_{1}]_{*})\\
&=[\Phi(I)\bullet (A_{12}+B_{21}),P_{1}]_{*}+[I\bullet (\Phi(A_{12})+\Phi(B_{21})),P_{1}]_{*}\\
&+[I\bullet (A_{12}+B_{21}),\Phi(P_{1})]_{*}.
\end{align*}
Hence $[I\bullet T,P_{1}]_{*}=0$, from which we get that $T_{21}=0.$ Similarly, we can prove that $T_{12}=0,$  proving the step.\\
\textbf{Step 2.2}. For every
$A_{11}\in\mathcal{A}_{11},B_{12}\in\mathcal {A}_{12},
C_{21}\in\mathcal {A}_{21}, D_{22}\in\mathcal {A}_{22},$ we have
$$\Phi(A_{11}+B_{12}+C_{21})=\Phi(A_{11})+\Phi(B_{12})+\Phi(C_{21})$$
and
$$\Phi(B_{12}+C_{21}+D_{22})=\Phi(B_{12})+\Phi(C_{21})+\Phi(D_{22}).$$

Let $T=\Phi(A_{11}+B_{12}+C_{21})-\Phi(A_{11})-\Phi(B_{12})-\Phi(C_{21})$.

It follows from Step 2.1 that
that
\begin{align*}
&[\Phi(I)\bullet (iP_{2}),A_{11}+B_{12}+C_{21}]_{*}+[I\bullet \Phi(iP_{2}),A_{11}+B_{12}+C_{21}]_{*}\\
&+[I\bullet (iP_{2}),\Phi(A_{11}+B_{12}+C_{21})]_{*}\\
&=\Phi([I\bullet (iP_{2}),A_{11}+B_{12}+C_{21}]_{*})\\
&=\Phi([I\bullet (iP_{2}),A_{11}]_{*})+\Phi([I\bullet (iP_{2}), B_{12}+C_{21}]_{*})\\
&=\Phi([I\bullet (iP_{2}),A_{11}]_{*})+\Phi([I\bullet (iP_{2}), B_{12}]_{*})+\Phi([I\bullet (iP_{2}), C_{21}]_{*})\\
&=[\Phi(I)\bullet (iP_{2}),A_{11}+B_{12}+C_{21}]_{*}+[I\bullet \Phi(iP_{2}),A_{11}+B_{12}+C_{21}]_{*}\\
&+[I\bullet (iP_{2}),\Phi(A_{11})+\Phi(B_{12})+\Phi(C_{21})]_{*}.
\end{align*}
From this, we get $[I\bullet (iP_{2}),T]_{*}=0.$ So
$T_{12}=T_{21}=T_{22}=0.$

Since $$[I\bullet(i(P_{2}-P_{1})),B_{12}]_{*}=[I\bullet(i(P_{2}-P_{1})),C_{21}]_{*}=0,$$ it
follows  that \begin{align*}
&[\Phi(I)\bullet (i(P_{2}-P_{1})),A_{11}+B_{12}+C_{21}]_{*}+[I\bullet \Phi(i(P_{2}-P_{1})),A_{11}+B_{12}+C_{21}]_{*}\\
&+[I\bullet (i(P_{2}-P_{1})),\Phi(A_{11}+B_{12}+C_{21})]_{*}\\
&=\Phi([I\bullet (i(P_{2}-P_{1})),A_{11}+B_{12}+C_{21}]_{*})\\
&=\Phi([I\bullet (i(P_{2}-P_{1})),A_{11}]_{*})+\Phi([I\bullet (i(P_{2}-P_{1})), B_{12}]_{*})+\Phi([I\bullet (i(P_{2}-P_{1})), C_{21}]_{*})\\
&=[\Phi(I)\bullet (i(P_{2}-P_{1})),A_{11}+B_{12}+C_{21}]_{*}+[I\bullet \Phi(i(P_{2}-P_{1})),A_{11}+B_{12}+C_{21}]_{*}\\
&+[I\bullet (i(P_{2}-P_{1})),\Phi(A_{11})+\Phi(B_{12})+\Phi(C_{21})]_{*},
\end{align*}
from which we get $[I\bullet (i(P_{2}-P_{1})),T]_{*}=0.$  So
$T_{11}=0,$  and then $T=0$.
Similarly, we can get that
$\Phi(B_{12}+C_{21}+D_{22})=\Phi(B_{12})+\Phi(C_{21})+\Phi(D_{22}).$\\\\
\textbf{Step 2.3}. For every $A_{11}\in\mathcal {A}_{11},
B_{12}\in\mathcal {A}_{12}, C_{21}\in\mathcal {A}_{21},
D_{22}\in\mathcal {A}_{22} ,$ we have
$$\Phi(A_{11}+B_{12}+C_{21}+D_{22})=\Phi(A_{11})+\Phi(B_{12})+\Phi(C_{21})+\Phi(D_{22}).$$

Let $T=\Phi(A_{11}+B_{12}+C_{21}+D_{22})-\Phi(A_{11})-\Phi(B_{12})-\Phi(C_{21})-\Phi(D_{22}).$
It follows from Step 2.2 that
\begin{align*}
&[\Phi(I)\bullet (iP_{2}),A_{11}+B_{12}+C_{21}+D_{22}]_{*}+[I\bullet \Phi(iP_{2}),A_{11}+B_{12}+C_{21}+D_{22}]_{*}\\
&+[I\bullet (iP_{2}),\Phi(A_{11}+B_{12}+C_{21}+D_{22})]_{*}\\
&=\Phi([I\bullet (iP_{2}),A_{11}+B_{12}+C_{21}+D_{22}]_{*})\\
&=\Phi([I\bullet (iP_{2}),A_{11}]_{*})+\Phi([I\bullet (iP_{2}), B_{12}+C_{21}+D_{22})\\
&=\Phi([I\bullet (iP_{2}),A_{11}]_{*})+\Phi([I\bullet (iP_{2}), B_{12}]_{*})+\Phi([I\bullet (iP_{2}), C_{21}]_{*})+\Phi([I\bullet (iP_{2}), D_{22}]_{*})\\
&=[\Phi(I)\bullet (iP_{2}),A_{11}+B_{12}+C_{21}++D_{22}]_{*}+[I\bullet \Phi(iP_{2}),A_{11}+B_{12}+C_{21}+D_{22}]_{*}\\
&+[I\bullet (iP_{2}),\Phi(A_{11})+\Phi(B_{12})+\Phi(C_{21})+\Phi(D_{22})]_{*}.
\end{align*}
From this, we get $[I\bullet (iP_{2}),T]_{*}=0.$ So
$T_{12}=T_{21}=T_{22}=0.$
Similarly,  we can prove $T_{11}=0,$ proving the step.\\
\textbf{Step 2.4}. For every $A_{jk}, B_{jk}\in\mathcal {A}_{jk}, 1\leq j\neq k\leq 2,$ we have
$$\Phi(A_{jk}+B_{jk})=\Phi(A_{jk})+\Phi(B_{jk}).$$

Since
$$[\frac{I}{2}\bullet (P_{j}+A_{jk}),P_{k}+B_{jk}]_{*}=(A_{jk}+B_{jk})-A^{\ast}_{jk}-B_{jk}A^{\ast}_{jk},$$ we get from
Step 2.3 that
\begin{align*}&\Phi(A_{jk}+B_{jk})+\Phi(-A^{\ast}_{jk})+\Phi(-B_{jk}A^{\ast}_{jk})\\
&=\Phi([\frac{I}{2}\bullet (P_{j}+A_{jk}),P_{k}+B_{jk}]_{*})\\
&=[\Phi(\frac{I}{2})\bullet (P_{j}+A_{jk}),P_{k}+B_{jk}]_{*}+[\frac{I}{2}\bullet \Phi(P_{j}+A_{jk}),P_{k}+B_{jk}]_{*}\\
&+[\frac{I}{2}\bullet (P_{j}+A_{jk}),\Phi(P_{k}+B_{jk})]_{*}\\
&=[\Phi(\frac{I}{2})\bullet (P_{j}+A_{jk}),P_{k}+B_{jk}]_{*}+[\frac{I}{2}\bullet (\Phi(P_{j})+\Phi(A_{jk})),P_{k}+B_{jk}]_{*}\\
&+[\frac{I}{2}\bullet (P_{j}+A_{jk}),(\Phi(P_{k})+\Phi(B_{jk}))]_{*}\\
&=\Phi([\frac{I}{2}\bullet P_{j},P_{k}]_{*})+\Phi([\frac{I}{2}\bullet P_{j},B_{jk}]_{*})+\Phi([\frac{I}{2}\bullet A_{jk},P_{k}]_{*})+\Phi([\frac{I}{2}\bullet A_{jk},B_{jk}]_{*})\\
&=\Phi(B_{jk})+\Phi(A_{jk}-A^{\ast}_{jk})+\Phi(-B_{jk}A^{\ast}_{jk})\\
&=\Phi(B_{jk})+\Phi(A_{jk})+\Phi(-A^{\ast}_{jk})+\Phi(-B_{jk}A^{\ast}_{jk}).
\end{align*}
 Then
$$\Phi(A_{jk}+B_{jk})=\Phi(A_{jk})+\Phi(B_{jk}).$$\\
\textbf{Step 2.5}. For every $A_{jj}, B_{jj}\in\mathcal {A}_{jj}, 1\leq j\leq 2,$ we have
$$\Phi(A_{jj}+B_{jj})=\Phi(A_{jj})+\Phi(B_{jj}).$$

Let $T=\Phi(A_{jj}+B_{jj})-\Phi(A_{jj})-\Phi(B_{jj}).$   For $1\leq j\neq k\leq 2$, it
follows  that
\begin{align*}
&[\Phi(I)\bullet (iP_{k}),A_{jj}+B_{jj}]_{*}+[I\bullet \Phi(iP_{k}),A_{jj}+B_{jj}]_{*}\\
&+[I\bullet (iP_{k}),\Phi(A_{jj}+B_{jj})]_{*}\\
&=\Phi([I\bullet (iP_{k}),A_{jj}+B_{jj}]_{*})\\
&=\Phi([I\bullet (iP_{k}),A_{jj}]_{*})+\Phi([I\bullet (iP_{k}), B_{jj}]_{*})\\
&=[\Phi(I)\bullet (iP_{k}),A_{jj}+B_{jj}]_{*}+[I\bullet \Phi(iP_{k}),A_{jj}+B_{jj}]_{*}\\
&+[I\bullet (iP_{k}),\Phi(A_{jj})+\Phi(B_{jj})]_{*}.
\end{align*}
From this, we get $[I\bullet (iP_{k}),T]_{*}=0.$ So
$T_{jk}=T_{kj}=T_{kk}=0.$  Now we get $T=T_{jj}.$

For every $C_{jk}\in\mathcal {A}_{jk}, j\neq k$,
it follows from Step 2.4 that
\begin{align*}
&[\Phi(I)\bullet (A_{jj}+B_{jj}),C_{jk}]_{*}+[I\bullet \Phi(A_{jj}+B_{jj}),C_{jk}]_{*}\\
&+[I\bullet (A_{jj}+B_{jj}),\Phi(C_{jk})]_{*}\\
&=\Phi([I\bullet (A_{jj}+B_{jj}),C_{jk}]_{*})\\
&=\Phi([I\bullet A_{jj},C_{jk}]_{*})+\Phi([I\bullet B_{jj},C_{jk}]_{*})\\
&=[\Phi(I)\bullet (A_{jj}+B_{jj}),C_{jk}]_{*}+[I\bullet \Phi(A_{jj})+\Phi(B_{jj}),C_{jk}]_{*}\\
&+[I\bullet (A_{jj}+B_{jj}),\Phi(C_{jk})]_{*}.
\end{align*}
Hence $[I\bullet T_{jj},C_{jk}]_{*}=0$ for all $C_{jk}\in\mathcal {A}_{jk},$ that is, $T_{jj}CP_{k}=0$ for all $C\in\mathcal {A}.$
It follows from  $(\spadesuit)$ and $(\clubsuit)$ that $T=T_{jj}=0,$ proving the step.

Now,  it follows from  Steps 2.3, 2.4 and 2.5 that $\Phi$ is additive, proving the Claim 2.\\
\textbf{Claim 3}.   $\Phi(I)$ is a self-adjont central element in $\mathcal {A}$.

On the one hand, we have
\begin{align*}0&=\Phi([I\bullet I, I]_{*})\\
&=[\Phi(I)\bullet I, I]_{*}+[I\bullet \Phi(I), I]_{*}+[I\bullet I, \Phi(I)]_{*}\\
&=[2\Phi(I), I]_{*}\\
&=2\Phi(I)-2\Phi(I)^{*},\end{align*}
which implies that  $\Phi(I)$ is a self-adjont element in $\mathcal {A}$.

On the other hand, for all $A\in\mathcal {A},$ we get
\begin{align*}0&=\Phi([I\bullet I, A]_{*})\\
&=[\Phi(I)\bullet I, A]_{*}+[I\bullet \Phi(I), A]_{*}+[I\bullet I, \Phi(A)]_{*}\\
&=2[2\Phi(I), A]_{*}\\
&=4(\Phi(I)A-A\Phi(I)),\end{align*}
which implies that  $\Phi(I)$ is a central element in $\mathcal {A}$.\\
\textbf{Claim 4}. $P_{1}\Phi(P_{1})P_{2}=-P_{1}\Phi(P_{2})P_{2}, P_{2}\Phi(P_{1})P_{1}=-P_{2}\Phi(P_{2})P_{1},
P_{1}\Phi(P_{2})P_{1}=P_{2}\Phi(P_{1})P_{2}=0.$

On the one hand, for $1\leq j\neq k\leq2$, it follows from Claim 3 that
\begin{align*}0&=\Phi([I\bullet P_{j}, P_{k}]_{*})\\
&=[\Phi(I)\bullet P_{j}, P_{k}]_{*}+[I\bullet \Phi(P_{j}), P_{k}]_{*}+[I\bullet P_{j}, \Phi(P_{k})]_{*}\\
&=[2\Phi(P_{j}), P_{k}]_{*}+[2P_{j}, \Phi(P_{k})]_{*}\\
&=2\Phi(P_{j})P_{k}-2P_{k}\Phi(P_{j})^{*}+2P_{j}\Phi(P_{k})-2\Phi(P_{k})P_{j}.\end{align*}
Multiplying both sides of the above equation by $P_{j}$ and $P_{k}$ from the left and right respectively, we obtain that
$P_{1}\Phi(P_{1})P_{2}=-P_{1}\Phi(P_{2})P_{2}$ and $P_{2}\Phi(P_{1})P_{1}=-P_{2}\Phi(P_{2})P_{1}$.

On the other hand,  we get
\begin{align*}0&=\Phi([I\bullet (iP_{j}), P_{k}]_{*})\\
&=[\Phi(I)\bullet (iP_{j}), P_{k}]_{*}+[I\bullet \Phi(iP_{j}), P_{k}]_{*}+[I\bullet (iP_{j}), \Phi(P_{k})]_{*}\\
&=[2\Phi(iP_{j}), P_{k}]_{*}+[2iP_{j}, \Phi(P_{k})]_{*}\\
&=2\Phi(iP_{j})P_{k}-2P_{k}\Phi(iP_{j})^{*}+2i(P_{j}\Phi(P_{k})+\Phi(P_{k})P_{j}).\end{align*}
Multiplying both sides of the above equation by $P_{j}$ from the left and right respectively, we obtain that
$P_{1}\Phi(P_{2})P_{1}=P_{2}\Phi(P_{1})P_{2}=0.$\\
\textbf{Claim 5}. $P_{1}\Phi(P_{1})P_{1}=P_{2}\Phi(P_{2})P_{2}=0.$

For every $A_{12}\in\mathcal {A}_{12}$, on the one hand, it follows from Claim 2 and Claim 3 that
\begin{align*}2\Phi(A_{12})&=\Phi([I\bullet P_{1}, A_{12}]_{*})\\
&=[\Phi(I)\bullet P_{1}, A_{12}]_{*}+[I\bullet \Phi(P_{1}), A_{12}]_{*}+[I\bullet P_{1}, \Phi(A_{12})]_{*}\\
&=[2\Phi(I) P_{1}, A_{12}]_{*}+[2\Phi(P_{1}), A_{12}]_{*}+[2P_{1}, \Phi(A_{12})]_{*}\\
&=2\Phi(I)A_{12}+2\Phi(P_{1})A_{12}-2A_{12}\Phi(P_{1})^{*}+2P_{1}\Phi(A_{12})-2\Phi(A_{12})P_{1}.\end{align*}
Multiplying both sides of the above equation by $P_{1}$ and $P_{2}$ from the left and right respectively, by Claim 4, we get that
\begin{equation}\label{eq 2.1}
P_{1}\Phi(P_{1})A_{12}+\Phi(I)A_{12}=0.
\end{equation}

On the other hand, we have
\begin{align*}2\Phi(A_{12})&=\Phi([P_{1}\bullet P_{1}, A_{12}]_{*})\\
&=[\Phi(P_{1})\bullet P_{1}, A_{12}]_{*}+[P_{1}\bullet \Phi(P_{1}), A_{12}]_{*}+[P_{1}\bullet P_{1}, \Phi(A_{12})]_{*}\\
&=[\Phi(P_{1})P_{1}+P_{1}\Phi(P_{1})^{*}, A_{12}]_{*}+[P_{1}\Phi(P_{1})+\Phi(P_{1})P_{1}, A_{12}]_{*}+[2P_{1}, \Phi(A_{12})]_{*}\\
&=\Phi(P_{1})A_{12}+P_{1}\Phi(P_{1})^{*}A_{12}-A_{12}\Phi(P_{1})P_{1}+P_{1}\Phi(P_{1})A_{12}\\
&+\Phi(P_{1})A_{12}-A_{12}\Phi(P_{1})^{*}P_{1}
+2P_{1}\Phi(A_{12})-2\Phi(A_{12})P_{1}.\end{align*}
Multiplying both sides of the above equation by $P_{1}$ and $P_{2}$ from the left and right respectively, we get that
\begin{equation} \label{eq 2.2} 3P_{1}\Phi(P_{1})A_{12}+P_{1}\Phi(P_{1})^{*}A_{12}=0.\end{equation}

Finally, we also have that
\begin{align*}2\Phi(A_{12})&=\Phi([P_{1}\bullet I, A_{12}]_{*})\\
&=[\Phi(P_{1})\bullet I, A_{12}]_{*}+[P_{1}\bullet \Phi(I), A_{12}]_{*}+[P_{1}\bullet I, \Phi(A_{12})]_{*}\\
&=[\Phi(P_{1})+\Phi(P_{1})^{*}, A_{12}]_{*}+[2P_{1}\Phi(I), A_{12}]_{*}+[2P_{1}, \Phi(A_{12})]_{*}\\
&=(\Phi(P_{1})+\Phi(P_{1})^{*})A_{12}-A_{12}(\Phi(P_{1})+\Phi(P_{1})^{*})+2\Phi(I)A_{12}+2P_{1}\Phi(A_{12})-2\Phi(A_{12})P_{1}.\end{align*}
Multiplying both sides of the above equation by $P_{1}$ and $P_{2}$ from the left and right respectively, by Claim 4,  we get that
\begin{equation}\label{eq 2.3} P_{1}\Phi(P_{1})A_{12}+P_{1}\Phi(P_{1})^{*}A_{12}+2\Phi(I)A_{12}=0.\end{equation}
It follows from  Eq. (\ref{eq 2.2}) and Eq. (\ref{eq 2.3}) that
\begin{equation}\label{eq 2.4}
P_{1}\Phi(P_{1})A_{12}-\Phi(I)A_{12}=0.
\end{equation}
Now, by  Eq. (\ref{eq 2.1}) and Eq. (\ref{eq 2.4}), we have $P_{1}\Phi(P_{1})A_{12}=0$, that is $P_{1}\Phi(P_{1})P_{1}AP_{2}=0$
for all $A\in\mathcal {A}$. It follows from  $(\clubsuit)$ that $P_{1}\Phi(P_{1})P_{1}=0.$ Similarly,  we can prove $P_{2}\Phi(P_{2})P_{2}=0.$\\
\textbf{Claim 6}. $\Phi(I)=0.$

By Claims 2, 4 and 5, we can get that
$$\Phi(I)=\Phi(P_{1})+\Phi(P_{2})=P_{1}\Phi(P_{1})P_{2}+P_{2}\Phi(P_{1})P_{1}+P_{1}\Phi(P_{2})P_{2}+P_{2}\Phi(P_{2})P_{1}=0.$$
\textbf{Claim 7}. For all $A, B\in\mathcal {A}$, we have $\Phi([A,B]_{*})=[\Phi(A),B]_{*}+[A, \Phi(B)]_{*}$.

It follows from  Claim 2 and Claim 6 that \begin{align*}2\Phi([A,B]_{*})&=\Phi([I\bullet A, B]_{*})\\
&=[\Phi(I)\bullet A, B]_{*}+[I\bullet \Phi(A), B]_{*}+[I\bullet A, \Phi(B)]_{*}\\
&=[2\Phi(A), B]_{*}+[2A, \Phi(B)]_{*}\\
&=2([\Phi(A),B]_{*}+[A, \Phi(B)]_{*}),\end{align*}
which implies that  $\Phi([A,B]_{*})=[\Phi(A),B]_{*}+[A, \Phi(B)]_{*}$.\\
\textbf{Claim 8}. For all $A\in\mathcal {A}$, $\Phi(A^{*})=\Phi(A)^{*}.$

For every $A\in\mathcal {A}$, by Claims 2, 6 and 7, we have
$$
\Phi(A)-\Phi(A^{*})
=\Phi([A,I]_{*})
=[\Phi(A), I]_{*}
=\Phi(A)-\Phi(A)^{*}.
$$
Hence
$\Phi(A^{*})=\Phi(A)^{*}.$\\

Now, let $T=P_{1}\Phi(P_{1})P_{2}-P_{2}\Phi(P_{1})P_{1}.$ Then $T^{*}=-T$.
Defining a map $\phi: \mathcal {A}\rightarrow \mathcal {A}$
by $\phi(A)=\Phi(A)-(AT-TA)$ for all $A\in\mathcal {A}.$ It is easy to verify that
$\phi$ has the following properties.\\
\textbf{Claim 9}. \begin{itemize} \item[(1)] For all $A, B\in\mathcal {A}$,  $\phi([A,B]_{*})=[\phi(A),B]_{*}+[A, \phi(B)]_{*};$

\item[(2)] $\phi(P_{i})=0, i=1,2;$

\item[(3)] $\phi$ is additive;

\item[(4)] $\phi$ is an additive derivation if and only if $\Phi$ is an additive derivation.
\end{itemize}
\textbf{Claim 10}. $\phi(\mathcal {A}_{ij})\subseteq\mathcal
{A}_{ij}, i,j=1,2.$

Let $A_{ij}\in\mathcal {A}_{ij}, 1\leq i\neq j\leq2.$ On the one
hand, it follows from $\phi(P_{i})=0$ that
$$\phi(A_{ij})=\phi([P_{i}, A_{ij}]_{*})
=[P_{i},\phi(A_{ij})]_{*}=P_{i}\phi(A_{ij})-\phi(A_{ij})P_{i}.
$$
Hence $P_{i}\phi(A_{ij})P_{i}=P_{j}\phi(A_{ij})P_{j}=P_{j}\phi(A_{ij})P_{i}=0.$ So $\phi({A}_{ij})=P_{i}\phi(
{A}_{ij})P_{j}\in \mathcal {A}_{ij}, 1\leq i\neq j\leq2.$

Let $A_{ii}\in\mathcal {A}_{ii}, i=1,2.$  Then
$$0=\phi([P_{j}, A_{ii}]_{*})
= [P_{j}, \phi(A_{ii})]_{*}=P_{j}\phi(A_{ii})-\phi(A_{ii})P_{j}.$$ Hence
$P_{i}\phi(A_{ii})P_{j}=P_{j}\phi(A_{ii})P_{i}=0.$ Now we let
$\phi(A_{ii})=P_{i}\phi(A_{ii})P_{i}+P_{j}\phi(A_{ii})P_{j}.$ For
any $B_{ij}\in\mathcal {A}_{ij},1\leq i\neq j\leq2,$ it follows
from $\phi(B_{ij})\in\mathcal {A}_{ij}$ that
$$0=\phi([B_{ij},A_{ii}]_{*})=[\phi(B_{ij}), A_{ii}]_{*}
+[B_{ij}, \phi(A_{ii})]_{*}=B_{ij}P_{j}\phi(A_{ii})P_{j}-P_{j}\phi(A_{ii})P_{j}B_{ij}^{*}.$$
So $P_{j}\phi(A_{ii})P_{j}B_{ij}^{*}=0$, that is
$P_{j}\phi(A_{ii})P_{j}BP_{i}=0$
holds true for any $B\in\mathcal {A}.$
It follows from $(\spadesuit)$ and $(\clubsuit)$ that
$P_{j}\phi(A_{ii})P_{j}=0.$ Now we get $\phi( {A}_{ii})=P_{i}\phi(
{A}_{ii})P_{i}\in \mathcal {A}_{ii}, i=1,2.$ \\
\textbf{Claim 11}. Let $A_{ii}, B_{ii}\in\mathcal {A}_{ii}$ and
$A_{ij}, B_{ij}\in\mathcal {A}_{ij}, 1\leq i\neq j\leq2.$ Then
$$\phi(A_{ii}B_{ii})=\phi(A_{ii})B_{ii}+A_{ii}\phi(B_{ii}),\phi(A_{ii}B_{ij})=\phi(A_{ii})B_{ij}+A_{ii}\phi(B_{ij}),$$
$$\phi(A_{ij}B_{ji})=\phi(A_{ij})B_{ji}+A_{ij}\phi(B_{ji}),\phi(A_{ij}B_{jj})=\phi(A_{ij})B_{jj}+A_{ij}\phi(B_{jj}).$$

It follows from Claim 10 that
\begin{align*}\phi(A_{ii}B_{ij})&=\phi([A_{ii}, B_{ij}]_{*})
=[\phi(A_{ii}), B_{ij}]_{*}+[A_{ii}, \phi(B_{ij}]_{*})
\\&=\phi(A_{ii})B_{ij}+A_{ii}\phi(B_{ij}).\end{align*}

For any $C_{ij}\in\mathcal {A}_{ij},$ we have
\begin{align*}\phi(A_{ii}B_{ii})C_{ij}+A_{ii}B_{ii}\phi(C_{ij})&=\phi(A_{ii}B_{ii}C_{ij})
\\&=\phi(A_{ii})B_{ii}C_{ij}+A_{ii}\phi(B_{ii}C_{ij})
\\&=\phi(A_{ii})B_{ii}C_{ij}+A_{ii}\phi(B_{ii})C_{ij}+A_{ii}B_{ii}\phi(C_{ij}).\end{align*}
Then $(\phi(A_{ii}B_{ii})-\phi(A_{ii})B_{ii}-A_{ii}\phi(B_{ii}))C_{ij}=0$
for any $C_{ij}\in\mathcal {A}_{ij}.$ It follows from  $(\spadesuit)$ and $(\clubsuit)$  that
$\Phi(A_{ii}B_{ii})=\Phi(A_{ii})B_{ii}+A_{ii}\Phi(B_{ii}).$

It follows from
Claim 10 that
\begin{align*}\phi(A_{ij}B_{ji})&=\phi([A_{ij}, B_{ji}]_{*})\\&=[\phi( A_{ij}),B_{ji}]_{*}+ [A_{ij}, \phi(B_{ji})]_{*}\\&=\phi(A_{ij})B_{ji}+A_{ij}\phi(B_{ji}).\end{align*}

For any $C_{ji}\in\mathcal {A}_{ji}$, we have
\begin{align*}\phi(C_{ji})A_{ij}B_{jj}+C_{ji}\phi(A_{ij}B_{jj})&=\phi(C_{ji}A_{ij}B_{jj})\\&=\phi(C_{ji}A_{ij})B_{jj}
+C_{ji}A_{ij}\Phi(B_{jj})\\&=\phi(C_{ji})A_{ij}B_{jj}+C_{ji}\phi(A_{ij})B_{jj}+C_{ji}A_{ij}\phi(B_{jj})
.\end{align*} Hence
$C_{ji}(\phi(A_{ij}B_{jj})-\phi(A_{ij})B_{jj}-A_{ij}\phi(B_{jj}))=0$
for any $C_{ji}\in\mathcal {A}_{ji}.$ It follows from  $(\spadesuit)$ and $(\clubsuit)$  that
$\phi(A_{ij}B_{jj})=\phi(A_{ij})B_{jj}+A_{ij}\phi(B_{jj}).$  \\
\textbf{Claim 12}.  $\Phi(AB)=\Phi(A)B+A\Phi(B)$ for all $A, B\in\mathcal {A}.$

Write $A=\sum^{2}_{i,j=1}A_{ij}, B=\sum^{2}_{i,j=1}B_{ij}\in\mathcal {A}$. Then $AB=A_{11}B_{11}+A_{11}B_{12}+A_{12}B_{21}+A_{12}B_{22}+A_{21}B_{11}+A_{21}B_{12}+A_{22}B_{21}+A_{22}B_{22}$.
It follows from Claim 11 and the additivity of $\phi$ that $\phi(AB)=\phi(A)B+A\phi(B).$ So  $\Phi(AB)=\Phi(A)B+A\Phi(B).$

Now, by Claims 2, 8 and 12, we have proved that $\Phi$ is an additive
$*-$derivation. This completes the proof of Theorem 2.1.
\end{proof}

\section{Corollaries}

In this section, we present some corollaries of  the main result.
An algebra $\mathcal{A}$ is called prime if $A\mathcal
{A}B=\{0\}$ for $A, B\in\mathcal {A}$ implies either $A=0$ or $B=0$.
Observing that prime $*$-algebras satisfy $(\spadesuit)$ and $(\clubsuit)$, we have the following corollary.

\begin{corollary}
Let $\mathcal {A}$ be a prime $*$-algebra with unit $I$ and $P$ be a nontrivial projection in $\mathcal {A}$. Then $\Phi$ is a nonlinear  mixed Jordan triple $*$-derivation on $\mathcal {A}$ if and only if $\Phi$ is an additive $*-$derivation.
\end{corollary}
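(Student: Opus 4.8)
The plan is to deduce this corollary directly from Theorem 2.1 rather than to rerun any of the claim-by-claim argument. Since that theorem already characterizes nonlinear mixed Jordan triple $*$-derivations as additive $*$-derivations under the two annihilator hypotheses $(\spadesuit)$ and $(\clubsuit)$, the only thing left to do is to verify that a prime $*$-algebra equipped with a nontrivial projection $P$ satisfies both of these conditions; once that is done, Theorem 2.1 applies verbatim and delivers the stated equivalence.

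First I would record that a nontrivial projection $P$ by definition satisfies $P\neq 0$ and $P\neq I$, so that $I-P\neq 0$ as well. To check $(\spadesuit)$, suppose $X\in\mathcal{A}$ satisfies $X\mathcal{A}P=0$. Reading this as $X\mathcal{A}P=\{0\}$ and invoking the defining property of primeness for the pair $(X,P)$, we conclude that either $X=0$ or $P=0$; since $P\neq 0$, we must have $X=0$. This establishes $(\spadesuit)$.

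The verification of $(\clubsuit)$ proceeds identically with $I-P$ in place of $P$: if $X\mathcal{A}(I-P)=0$, then primeness applied to the pair $(X,I-P)$ forces $X=0$ or $I-P=0$, and since $I-P\neq 0$ we obtain $X=0$. With both $(\spadesuit)$ and $(\clubsuit)$ confirmed, Theorem 2.1 yields that $\Phi$ is a nonlinear mixed Jordan triple $*$-derivation on $\mathcal{A}$ if and only if it is an additive $*$-derivation, which is exactly the claim.

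I do not expect any genuine obstacle in this argument: its entire content is the elementary observation that primeness converts a one-sided annihilation of the form $X\mathcal{A}Q=0$ into $X=0$ whenever $Q\neq 0$. The only point that requires any care is to note explicitly that nontriviality of $P$ guarantees that both $P$ and its complement $I-P$ are nonzero, since this is precisely what licenses the two separate applications of primeness needed for $(\spadesuit)$ and $(\clubsuit)$ respectively.
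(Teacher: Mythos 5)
Your proposal is correct and follows exactly the paper's route: the paper's entire justification is the remark that prime $*$-algebras satisfy $(\spadesuit)$ and $(\clubsuit)$, after which Theorem 2.1 applies. You have merely spelled out that one-line observation (primeness plus $P\neq 0$ and $I-P\neq 0$ kills any $X$ with $X\mathcal{A}P=0$ or $X\mathcal{A}(I-P)=0$), which is a faithful and complete version of the same argument.
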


A von Neumann algebra $\mathcal{M}$ is a weakly closed, self-adjoint algebra of operators on a Hilbert space $\mathcal{H}$ containing the identity operator $I$. It is shown in \cite{4} and \cite{Li} that if a von Neumann algebra has no central summands of type $I_1$, then $\mathcal{M}$ satifies $(\spadesuit)$ and $(\clubsuit)$. Now we have the following corollary.

\begin{corollary}
Let $\mathcal{M}$ be a von Neumann algebra with no central summands of type $I_1$. Then $\Phi: \mathcal{M} \rightarrow \mathcal{M}$ is a nonlinear  mixed Jordan triple $*$-derivation if and only if $\Phi$ is an additive $*-$derivation.
\end{corollary}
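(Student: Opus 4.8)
The plan is to obtain this corollary as a direct specialization of Theorem 2.1: all that needs to be done is to exhibit, inside a von Neumann algebra $\mathcal{M}$ having no central summands of type $I_1$, a nontrivial projection $P$ for which the two annihilator conditions $(\spadesuit)$ and $(\clubsuit)$ hold. Once such a $P$ is produced, Theorem 2.1 applies word for word and yields both implications of the asserted equivalence, so there is nothing further to check.

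The structural input I would invoke is the one recorded in \cite{4} and \cite{Li}: a von Neumann algebra with no central summands of type $I_1$ contains a nontrivial projection $P$ such that the central carriers of $P$ and of $I - P$ are both equal to $I$. I would then translate ``full central carrier'' into $(\spadesuit)$. Suppose $X\mathcal{M}P = 0$; then $XmP\xi = 0$ for every $m \in \mathcal{M}$ and every $\xi \in \mathcal{H}$, so $X$ annihilates the closed subspace spanned by $\mathcal{M}P\mathcal{H}$, which is exactly the range of the central carrier $C_P$. Since $C_P = I$, this subspace is all of $\mathcal{H}$ and hence $X = 0$, giving $(\spadesuit)$. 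Running the same argument with $I - P$ in place of $P$ and using $C_{I-P} = I$ yields $(\clubsuit)$.

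With $(\spadesuit)$ and $(\clubsuit)$ in hand, the proof is complete by Theorem 2.1. There is no genuine obstacle at the level of the corollary; the only substantive point is the existence of a projection with $C_P = C_{I-P} = I$, and this is precisely where the absence of type $I_1$ central summands is needed: on an abelian central summand every projection is central and so has central carrier equal to itself, which rules out a nontrivial projection of full central carrier there. Because this existence statement is quoted from \cite{4,Li}, it requires no reproof here, and the corollary follows at once.
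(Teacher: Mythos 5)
Your proposal is correct and follows the same route as the paper: both simply specialize Theorem 2.1 after noting that a von Neumann algebra with no central summands of type $I_1$ admits a nontrivial projection satisfying $(\spadesuit)$ and $(\clubsuit)$, a fact the paper attributes to \cite{4} and \cite{Li}. The only difference is that you unpack that citation by deriving $(\spadesuit)$ and $(\clubsuit)$ from the existence of a projection $P$ with $C_P=C_{I-P}=I$ via the central-carrier argument, which is a valid (and standard) filling-in of the detail the paper leaves to the references.
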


$\mathcal{M}$ is a factor von Neumann algebra if its center only contains the scalar operators. It is well known that a factor von Neumann algebra is prime and then we have the following corollary.

\begin{corollary}
Let $\mathcal{M}$ be a factor von Neumann algebra with dim$\mathcal{M}\geq2$. Then $\Phi: \mathcal{M} \rightarrow \mathcal{M}$ is a nonlinear  mixed Jordan triple $*$-derivation if and only if $\Phi$ is an additive $*-$derivation.
\end{corollary}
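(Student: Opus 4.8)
The plan is to exploit the Peirce decomposition induced by $P$ and to run the standard machinery for nonlinear derivation-type identities, the eventual goal being to reduce the mixed Jordan triple relation to a skew Lie derivation relation once enough structure is in place. I would write $P_1 = P$, $P_2 = I - P$ and $\mathcal{A}_{jk} = P_j\mathcal{A}P_k$, so that $\mathcal{A} = \sum_{j,k=1}^2 \mathcal{A}_{jk}$, and note that since the converse is a routine verification I may concentrate on showing that a map $\Phi$ satisfying the identity is an additive $*$-derivation. First I would record $\Phi(0) = 0$ by substituting $A = B = C = 0$. The two structural facts I will lean on repeatedly are $I\bullet A = 2A$ (since $I^*=I$), so that $[I\bullet A, B]_* = 2[A,B]_*$, and $[A,I]_* = A - A^*$.

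The heart of the argument, and the step I expect to be the main obstacle, is establishing additivity. The strategy is to prove it componentwise using well-chosen probe elements. Taking probes of the form $I \bullet (i(P_2 - P_1))$ or $I \bullet (iP_k)$, the outer factor $[\,\cdot\,,X]_*$ annihilates suitable Peirce components because $i(P_2 - P_1)$ and $iP_k$ commute appropriately with the off-diagonal or diagonal pieces; feeding a sum $A+B$ into the identity and comparing with $\Phi(A) + \Phi(B)$ then forces the defect $T = \Phi(A+B) - \Phi(A) - \Phi(B)$ to satisfy $[I \bullet (\text{probe}), T]_* = 0$, which kills several Peirce blocks of $T$ at a stroke. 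Iterating over a sequence of probes—first for cross sums $\mathcal{A}_{12} + \mathcal{A}_{21}$, then for three- and four-fold sums, then within a single off-diagonal block, and finally within a diagonal block—would yield full additivity. The genuinely delicate cases are the intra-component ones: there commutation alone does not suffice, and I would instead use a multiplicative probe such as $[\frac{I}{2}\bullet(P_j + A_{jk}), P_k + B_{jk}]_*$, whose expansion produces the true product cross terms, together with essential appeals to the nondegeneracy hypotheses $(\spadesuit)$ and $(\clubsuit)$ to eliminate the last residual diagonal defect $T_{jj}$.

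With additivity in hand, I would next analyze the unit and the projections. Substituting $A=B=C=I$ and then $A=B=I$ with $C$ arbitrary (so that the identity collapses to expressions in $[2\Phi(I), \,\cdot\,]_*$) shows $\Phi(I)$ is self-adjoint and central. A short computation with $[I\bullet P_j, P_k]_*$ and $[I\bullet(iP_j), P_k]_*$ relates the off-diagonal blocks of $\Phi(P_1)$ and $\Phi(P_2)$, while a further triple of identities drawn from $[I\bullet P_1, A_{12}]_*$, $[P_1\bullet P_1, A_{12}]_*$ and $[P_1 \bullet I, A_{12}]_*$—again combined with $(\spadesuit)$ and $(\clubsuit)$—forces $P_i\Phi(P_i)P_i = 0$. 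Assembling these gives $\Phi(I) = 0$. Because $[I\bullet A, B]_* = 2[A, B]_*$, setting the first slot to $I$ now collapses the mixed identity into the skew Lie derivation relation $\Phi([A,B]_*) = [\Phi(A), B]_* + [A, \Phi(B)]_*$, and taking $B = I$ in this, together with $[A,I]_* = A - A^*$, yields $\Phi(A^*) = \Phi(A)^*$.

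The final stage is genuine multiplicativity. I would subtract the inner derivation implemented by the skew-adjoint element $T = P_1\Phi(P_1)P_2 - P_2\Phi(P_1)P_1$, setting $\phi(A) = \Phi(A) - (AT - TA)$; this $\phi$ inherits the skew Lie derivation identity, remains additive, and satisfies the normalization $\phi(P_i) = 0$, which makes the Peirce calculus clean. Using $\phi(P_i) = 0$ in $\phi([P_i, A_{ij}]_*)$ and $\phi([P_j, A_{ii}]_*)$ and invoking the nondegeneracy conditions shows $\phi(\mathcal{A}_{ij}) \subseteq \mathcal{A}_{ij}$. From the skew Lie relation applied to products $[A_{ii}, B_{ij}]_*$, $[A_{ij}, B_{ji}]_*$ and their analogues I would read off the Leibniz rule on each pairing of blocks, and then assemble these via additivity to obtain $\phi(AB) = \phi(A)B + A\phi(B)$, hence $\Phi(AB) = \Phi(A)B + A\Phi(B)$. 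Everything after additivity is comparatively mechanical, since $\Phi(I) = 0$ has already reduced the problem to the skew Lie derivation setting.
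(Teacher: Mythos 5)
Your proposal reproduces, step by step, the machinery of the paper's Theorem 2.1 (Peirce decomposition, probe elements $I\bullet(i(P_2-P_1))$ and $I\bullet(iP_k)$, the multiplicative probe $[\frac{I}{2}\bullet(P_j+A_{jk}),P_k+B_{jk}]_*$, centrality of $\Phi(I)$, reduction to the skew Lie relation, subtraction of the inner derivation by $T=P_1\Phi(P_1)P_2-P_2\Phi(P_1)P_1$, and the block-by-block Leibniz rules), and that machinery is sound. But the statement you were asked to prove is the corollary about a \emph{factor von Neumann algebra} $\mathcal{M}$ with $\dim\mathcal{M}\geq 2$, and there is a genuine gap: your entire argument runs on a nontrivial projection $P$ satisfying the nondegeneracy conditions $(\spadesuit)$ and $(\clubsuit)$, which you repeatedly call ``hypotheses.'' In the corollary they are not hypotheses; nothing in the statement hands you such a $P$. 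Every place your argument invokes them --- killing the diagonal defect $T_{jj}$ in the additivity step, proving $P_i\Phi(P_i)P_i=0$, establishing $\phi(\mathcal{A}_{ij})\subseteq\mathcal{A}_{ij}$ and the Leibniz identities --- is unjustified until you derive these properties from factoriality. That derivation is precisely what the paper's proof of this corollary consists of, and it is all the proof the corollary needs: a factor von Neumann algebra is prime (if $X\mathcal{M}Y=0$ with $Y\neq 0$, then $X=0$), primeness immediately gives both $(\spadesuit)$ and $(\clubsuit)$ for \emph{any} nontrivial projection $P$, and $\dim\mathcal{M}\geq 2$ together with the spectral theorem guarantees that a nontrivial projection exists (otherwise $\mathcal{M}=\mathbb{C}I$).

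The fix is short: insert the paragraph above verifying primeness and the existence of $P$, and then you may either quote Theorem 2.1 directly (the paper's route, which also explains why the theorem was stated with the abstract conditions $(\spadesuit)$, $(\clubsuit)$ in the first place) or keep your inlined re-derivation. Re-proving the general theorem inside the corollary is logically harmless but buys you nothing here; the corollary's entire mathematical content is the hypothesis verification you omitted.
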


 $B(\mathcal{H})$ denotes the algebra of all bounded linear operators
on a complex Hilbert space $\mathcal{H}$. We denote the subalgebra of all bounded
finite rank operators by $\mathcal{F}(H)\subseteq B(\mathcal{H})$. We call a subalgebra $\mathcal {A}$ of $B(\mathcal{H})$ a
standard operator algebra if it contains $\mathcal{F}(H)$. Now we have the following corollary.

\begin{corollary}
Let $\mathcal{H}$ be an infinite dimensional complex Hilbert space
and $\mathcal {A}$ be a standard operator algebra on $\mathcal{H}$ containing the identity operator $I$.
Suppose that $\mathcal {A}$ is closed under the adjoint operation. Then $\Phi: \mathcal{A} \rightarrow \mathcal{A}$ is
a nonlinear  mixed Jordan triple $*$-derivation if and only if $\Phi$ is a linear $*$-derivation.
Moreover, there exists an operator $T\in B(\mathcal{H})$ satisfying $T + T^{*}= 0$ such that
$\Phi(A)=AT-TA$
for all $A\in A$, i.e., $\Phi$ is inner.
\end{corollary}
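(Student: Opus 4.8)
The plan is to deduce Corollary 2.5 from Theorem 2.1 and then upgrade ``additive $*$-derivation'' to ``linear inner $*$-derivation.'' First I would confirm that a self-adjoint standard operator algebra $\mathcal{A}$ on an infinite-dimensional $\mathcal{H}$ fulfils the hypotheses of Theorem 2.1. For $x,y\in\mathcal{H}$ let $x\otimes y$ denote the rank-one operator $z\mapsto\langle z,y\rangle x$; all such operators lie in $\mathcal{F}(H)\subseteq\mathcal{A}$, and since $\dim\mathcal{H}=\infty$ and $\mathcal{A}$ is closed under the adjoint, a rank-one projection $P=\xi\otimes\xi$ with $\|\xi\|=1$ is a nontrivial projection of $\mathcal{A}$. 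To verify $(\spadesuit)$ I would assume $X\mathcal{A}P=0$ and test against $y\otimes u\in\mathcal{F}(H)$: from $(y\otimes u)P=y\otimes(Pu)$ and $X\big(y\otimes(Pu)\big)=(Xy)\otimes(Pu)$ one gets $(Xy)\otimes(Pu)=0$ for all $y,u$, and choosing $u$ with $Pu\neq0$ forces $X=0$; condition $(\clubsuit)$ is identical with $I-P$ in place of $P$. Theorem 2.1 then shows that $\Phi$ is an additive $*$-derivation.

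The remaining task is to prove that such a $\Phi$ is complex-linear and spatial. I would first restrict $\Phi$ to the ideal $\mathcal{F}(H)$, on which it is an additive derivation, and use the product rule $(x\otimes y)(u\otimes v)=\langle u,y\rangle\,(x\otimes v)$ to recover an implementing operator: fixing the unit vector $\xi$, set $T\eta=-\Phi(\eta\otimes\xi)\xi$ for $\eta\in\mathcal{H}$. The goal is to show that $T$ is a bounded linear operator and that $\Phi(F)=FT-TF$ for every $F\in\mathcal{F}(H)$. To pass from $\mathcal{F}(H)$ to all of $\mathcal{A}$ I would use the ideal trick: for $A\in\mathcal{A}$ and $F\in\mathcal{F}(H)$ one has $AF\in\mathcal{F}(H)$, so $\Phi(A)F+A\Phi(F)=\Phi(AF)=(AF)T-T(AF)$, which together with $\Phi(F)=FT-TF$ yields $\big(\Phi(A)-(AT-TA)\big)F=0$ for all finite-rank $F$; since $YF=0$ for every $F\in\mathcal{F}(H)$ forces $Y=0$, we obtain $\Phi(A)=AT-TA$ for every $A\in\mathcal{A}$, and in particular $\Phi$ is linear. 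Finally the identity $\Phi(A^{*})=\Phi(A)^{*}$ becomes $A^{*}(T+T^{*})=(T+T^{*})A^{*}$ for all $A$, so $T+T^{*}$ commutes with $\mathcal{A}$ and equals some scalar $\lambda I$; as it is self-adjoint, $\lambda\in\mathbb{R}$, and replacing $T$ by $T-\tfrac{\lambda}{2}I$ leaves $AT-TA$ unchanged while giving $T+T^{*}=0$, which is the asserted inner form.

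I expect the crux to be the middle step: the construction of $T$ on $\mathcal{F}(H)$ together with the proof that it is bounded and linear. Additivity of $\Phi$ alone yields only $\mathbb{Q}$-linearity and no norm control, so the boundedness of $T$ (and hence the complex-linearity of $\Phi$, which the spatial form $AT-TA$ then delivers for free) has to be extracted from the finite-rank structure and the derivation identity; this is where an automatic-continuity argument in the spirit of the standard-operator-algebra treatment in \cite{Ji} would be needed. The two outer steps, namely checking $(\spadesuit)$ and $(\clubsuit)$ and running the ideal trick together with the $*$-normalization, are routine once $T$ is in hand.
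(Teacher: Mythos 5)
Your outline agrees with the paper's proof at both ends: the verification that a self-adjoint standard operator algebra satisfies the hypotheses of Theorem 2.1 (the paper simply observes that $\mathcal{A}$ is prime, but your rank-one computation with $P=\xi\otimes\xi$ is a correct substitute), and your final normalization step producing $T+T^{*}=0$ from $\Phi(A^{*})=\Phi(A)^{*}$ is word-for-word the paper's argument. The difference is the middle step, and there your proposal has a genuine gap. The paper never constructs the implementing operator: it quotes \v{S}emrl's theorem (reference \cite{p}) that every additive derivation of a standard operator algebra on an \emph{infinite-dimensional} space is automatically a linear inner derivation $A\mapsto AS-SA$ with $S\in B(\mathcal{H})$. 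You propose instead to rebuild this theorem by hand --- define $T\eta=-\Phi(\eta\otimes\xi)\xi$, prove $\Phi(F)=FT-TF$ on $\mathcal{F}(H)$, then extend by the ideal trick --- but you explicitly leave the crux (linearity and boundedness of $T$) to an unspecified automatic-continuity argument. Flagging the hard point is not the same as closing it.

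Moreover, the difficulty is not merely one of norm control. Additive derivations can genuinely fail to be linear: if $\delta$ is a nonzero additive derivation of the field $\mathbb{C}$ (such maps exist via a transcendence basis, and $\delta$ can be chosen to commute with complex conjugation), then applying $\delta$ entrywise defines an additive $*$-derivation of $M_{n}(\mathbb{C})$ that is neither linear nor inner. So the statement you are trying to prove is \emph{false} on finite-dimensional spaces, while your sketch --- rank-one construction plus ideal trick --- uses $\dim\mathcal{H}=\infty$ nowhere except to produce a nontrivial projection, which finite dimensions also provide. Hence the sketch cannot be completed as described: the missing ingredient is precisely the argument that the induced scalar map $\lambda\mapsto\Phi(\lambda I)$ vanishes in the infinite-dimensional setting, and that argument is the substance of \v{S}emrl's theorem. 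The efficient fix is the paper's route: once Theorem 2.1 yields that $\Phi$ is an additive $*$-derivation, cite \cite{p} to get $\Phi(A)=AS-SA$ linear and inner, and then run your (correct) normalization replacing $S$ by $T=S-\tfrac{1}{2}\lambda I$.
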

\begin{proof}
Since $\mathcal {A}$ is prime, we have that $\Phi$ is an additive $*$-derivation.
It follows from
\cite{p} that $\Phi$ is a linear
inner derivation, i.e., there exists an operator $S\in B(\mathcal{H})$ such that
$\Phi(A)=AS-SA$.
Using the fact $\Phi(A^{*}) =\Phi(A)^{*}$, we have
$$A^{*}S-SA^{*}=\Phi(A^{*}) = \Phi(A)^{*}=-A^{*}S^{*} + S^{*}A^{*}$$
for all $A\in A$. This leads to $A^{*}(S + S^{*})=(S + S^{*})A^{*}$. Hence, $S + S^{*} = \lambda I$
for some $\lambda\in \mathbb{R}$. Let us set $T =S-\frac{1}{2}\lambda I$. One can check that $T + T^{*}= 0$ such that
$\Phi(A)=AT-TA$.
\end{proof}

\bibliographystyle{amsplain}

\end{document}